\newtheorem{thm}{Theorem}[section]
\newtheorem{cor}[thm]{Corollary}
\numberwithin{equation}{section}
\author{Wayne Grey}
\address{Department of Mathematics,
University of Western Ontario,
London, Canada}
\email{wgrey@uwo.ca}
\author{Gord Sinnamon}
\address{Department of Mathematics,
University of Western Ontario,
London, Canada}
\email{sinnamon@uwo.ca}
\thanks{Supported by the Natural Sciences and Engineering Research Council of Canada}
\keywords{mixed norm, permuted mixed norm, product operator, Young's inequality, bivariate Laplace transform}
\subjclass[2010]{Primary 46E30, Secondary 44A35, 26D15}
\begin{document}

\title{Product operators on mixed norm spaces}

\begin{abstract} Inequalities for product operators on mixed norm Lebesgue spaces and permuted mixed norm Lebesgue spaces are established. They depend only on inequalities for the factors and on the Lebesgue indices involved. Inequalities for the bivariate Laplace transform are given to illustrate the method. Also, an elementary proof is presented for an $n$-variable Young's inequality in mixed norm spaces.
\end{abstract}

\maketitle

\section{Introduction}

The techniques used to study embeddings of mixed norm spaces in \cite{thesis} and \cite{GS} can be extended to work with operators other than the identity. Here we begin this work by considering a select class of operators. Mixed norm spaces are spaces of multivariable functions in which the norm takes advantage of the product structure in the domain. They have a long informal history but were first named and formally studied by Benedek and Panzone in \cite{BP}. Permuted mixed norms only arise when studying more than a single space, since they occur when the order in which the factor norms are taken is different in different spaces. The importance of permuted mixed norms was noted in \cite{F} and \cite[\S XI.1]{KA}. They also appear in \cite{AK}, a study of the Hardy operator in mixed-norm Lebesgue spaces, and in the papers \cite{KZ} and \cite{AKZ}, in which partial integral operators are considered between mixed-norm Banach function spaces, with Lebegue and Orlicz norms as special cases. A systematic study of continuous inclusions between mixed-norm Lebesgue spaces was undertaken in the first author's thesis, \cite{thesis}. 

It will be convenient to introduce the two-variable mixed norm spaces needed in Section \ref{prod} first, postponing the introduction of $n$-variable spaces until Section \ref{conv}. Let $(T_1,\lambda_1)$ and $(T_2,\lambda_2)$ be $\sigma$-finite measure spaces and let $L_{\lambda_1\times\lambda_2}^0$ denote the collection of $(\lambda_1\times\lambda_2)$-measurable functions on $T_1\times T_2$. In particular, either or both of $\lambda_1$ and $\lambda_2$ may be ``counting'' measure. Thus, the definitions of this section and the results of the next apply to weighted or unweighted sums as special cases. 

Fix indices $p_1,p_2\in (0,\infty)$. For any $f\in L_{\lambda_1\times\lambda_2}^0$,
\[
\|f\|_{L^{(p_1,p_2)}_{\lambda_1\times\lambda_2}}
=\left(\int_{T_2}
\left(\int_{T_1} |f(t_1,t_2)|^{p_1}\,d\lambda_1(t_1)\right)^{p_2/p_1}\,d\lambda_2(t_2)\right)^{1/p_2}.
\]
The first variable of the function $f$ is always in the $\lambda_1$ measure space, and the order of the indices and measures indicates which variable is the ``inner'' one. So for any $f\in L_{\lambda_1\times\lambda_2}^0$,
\[
\|f\|_{L^{(p_2,p_1)}_{\lambda_2\times\lambda_1}}
=\left(\int_{T_1}
\left(\int_{T_2} |f(t_1,t_2)|^{p_2}\,d\lambda_2(t_2)\right)^{p_1/p_2}\,d\lambda_1(t_1)\right)^{1/p_1}.
\]
Although these are genuine norms only when $p_1\ge1$ and $p_2\ge1$ we will refer to them as mixed norms even when some indices are less than 1. Also, the results we present will often extend to the case when one or both of the indices is infinite, indicating that the supremum norm is to be taken in that factor.

The product operators that we consider will be introduced in Section \ref{prod}. As mentioned above, we restrict our attention to the two-variable case. Although the extension from bivariate operators to multivariate operators may seem straightforward, the delicate arguments needed in the embedding case in \cite{thesis} and \cite{GS} and the advanced techniques introduced in \cite{G} put this extension beyond the scope of the present article.

Convolution operators have some product structure but not enough to make them product operators in general. Convolution is considered in Section \ref{conv}, where it leads to an elementary proof of a multivariate mixed norm Young's inequality. 

The Hardy operators considered in \cite{AK} are product operators (and also  convolution operators) with the added advantage that weighted norm inequalities for the factors have been characterized. The results may be compared with Theorem \ref{K}, below. Hardy operators are not the only operators with these useful properties, see for example, the Riemann-Liouville operators studied in \cite{St}.

We will make frequent use of Minkowski's (integral) inequality in the following mixed norm form: If $0<p_1\le p_2<\infty$, then,
\[
\|f\|_{L^{(p_1,p_2)}_{\lambda_1\times\lambda_2}}
\le \|f\|_{L^{(p_2,p_1)}_{\lambda_2\times\lambda_1}}, \quad f\in L_{\lambda_1\times\lambda_2}^+.
\]
For easy recognition, we will enclose the function in square brackets when applying Minkowski's inequality in integral estimates. It becomes,
\begin{multline*}
\left(\int_{T_2}\left(\int_{T_1} [f(t_1,t_2)]^{p_1}\,d\lambda_1(t_1)\right)^{p_2/p_1}
\,d\lambda_2(t_2)\right)^{1/p_2}\\
\le \left(\int_{T_1}\left(\int_{T_2} [f(t_1,t_2)]^{p_2}\,d\lambda_2(t_2)\right)^{p_1/p_2}
\,d\lambda_1(t_1)\right)^{1/p_1}.
\end{multline*}
\section{Product operators}\label{prod} Let $(T_1,\lambda_1)$, $(T_2,\lambda_2)$, $(X_1,\mu_1)$, and $(X_2,\mu_2)$ be $\sigma$-finite measure spaces.  Let $L_{\lambda_1\times\lambda_2}^+$ denote the collection of non-negative $(\lambda_1\times\lambda_2)$-measurable functions. 

An operator $K:L_{\lambda_1\times\lambda_2}^+\to L_{\mu_1\times\mu_2}^+$ will be called a {\it product operator} provided it can be expressed in the form,
\[
Kf(x_1,x_2)=\iint_{T_1\times T_2} k_1(x_1,t_1)k_2(x_2,t_2)f(t_1,t_2)\,d(\lambda_1\times\lambda_2)(t_1,t_2),
\]
where $k_j$ is a non-negative $(\mu_j\times\lambda_j)$-measurable function for $j=1,2$. In this case we define $K_j:L_{\lambda_j}^+\to L_{\mu_j}^+$ by
\[
K_jf(x)=\int_{T_j} k_j(x,t)f(t)\,d\lambda_j(t), \quad j=1,2.
\]
Note that, by Tonelli's theorem,
\begin{align*}
Kf(x_1,x_2)&=\int_{T_1} k_1(x_1,t_1)\int_{T_2} k_2(x_2,t_2)f(t_1,t_2)\,d\lambda_2(t_2)\,d\lambda_1(t_1)\\
&=\int_{T_2} k_2(x_2,t_2)\int_{T_1} k_1(x_1,t_1)f(t_1,t_2)\,d\lambda_1(t_1)\,d\lambda_2(t_2).
\end{align*}

Suppose $p_1$, $p_2$, $r_1$, and $r_2$ are positive and let $C_j$ be the least constant, finite or infinite, such that
\[
\|K_jf\|_{L^{r_j}_{\mu_j}}\le C_j\|f\|_{L^{p_j}_{\lambda_j}},\quad f\in L_{\lambda_j}^+.
\]

\begin{thm}\label{K}  Fix positive indices $p_1$, $p_2$, $r_1$, and $r_2$. Suppose $K$ is a product operator, with $k_j$, $K_j$, $C_j$ as above for $j=1,2$.\begin{enumerate}[leftmargin=1.85em, label=\rm{(\alph*)}]
\item\label{Ka} If $r_1\ge1$ then $K$ satisfies the mixed norm inequality,
\[
\|Kf\|_{L^{(r_1,r_2)}_{\mu_1\times\mu_2}}
\le C_1C_2\|f\|_{L^{(p_1,p_2)}_{\lambda_1\times\lambda_2}}, \quad f\in L_{\lambda_1\times\lambda_2}^+.
\]
\item\label{Kb} If $r_1\ge1$, $r_2\ge 1$ and $\min(p_1,r_1)\le\max(p_2,r_2)$ then $K$ satisfies the permuted mixed norm inequality,
\[
\|Kf\|_{L^{(r_1,r_2)}_{\mu_1\times\mu_2}}
\le C_1C_2\|f\|_{L^{(p_2,p_1)}_{\lambda_2\times\lambda_1}}, \quad f\in L_{\lambda_1\times\lambda_2}^+.
\]
\end{enumerate}
\end{thm}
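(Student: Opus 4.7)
\textbf{For part (a)}, I plan to use the factorization
\[
Kf(x_1,x_2) = \int_{T_2} k_2(x_2,t_2) h(x_1,t_2)\,d\lambda_2(t_2),
\]
with $h(x_1,t_2)=K_1(f(\cdot,t_2))(x_1)$. For fixed $x_2$, Minkowski's integral inequality (justified by $r_1\ge1$) pulls the $L^{r_1}_{\mu_1}$-norm inside the $t_2$-integral:
\[
\|Kf(\cdot,x_2)\|_{L^{r_1}_{\mu_1}} \le \int_{T_2} k_2(x_2,t_2)\|h(\cdot,t_2)\|_{L^{r_1}_{\mu_1}}\,d\lambda_2(t_2) = K_2(\phi)(x_2),
\]
where $\phi(t_2)=\|h(\cdot,t_2)\|_{L^{r_1}_{\mu_1}}\le C_1\|f(\cdot,t_2)\|_{L^{p_1}_{\lambda_1}}$ by the $K_1$-estimate. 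Taking the $L^{r_2}_{\mu_2}$-norm in $x_2$ and applying the $K_2$-estimate to $\phi$ finishes (a). By symmetry in the indices, the same argument yields $\|Kf\|_{L^{(r_2,r_1)}_{\mu_2\times\mu_1}}\le C_1C_2\|f\|_{L^{(p_2,p_1)}_{\lambda_2\times\lambda_1}}$ whenever $r_2\ge1$.

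\textbf{For part (b)}, the condition $\min(p_1,r_1)\le\max(p_2,r_2)$ is equivalent to the disjunction ``$p_1\le p_2$ or $p_1\le r_2$ or $r_1\le p_2$ or $r_1\le r_2$'', so I argue by four (overlapping) subcases. If $p_1\le p_2$, part (a) combined with the mixed-norm Minkowski inequality applied to $f$ yields (b). If $r_1\le r_2$, mixed-norm Minkowski applied to $Kf$ reduces the problem to the symmetric form of (a). If $p_1\le r_2$, write $Kf(x_1,x_2)=K_1(g(\cdot,x_2))(x_1)$ with $g(t_1,x_2)=K_2(f(t_1,\cdot))(x_2)$; the slicewise $K_1$-estimate gives $\|Kf\|_{L^{(r_1,r_2)}_{\mu_1\times\mu_2}}\le C_1\|g\|_{L^{(p_1,r_2)}_{\lambda_1\times\mu_2}}$, mixed-norm Minkowski (using $p_1\le r_2$) flips this to $C_1\|g\|_{L^{(r_2,p_1)}_{\mu_2\times\lambda_1}}$, and the slicewise $K_2$-estimate closes the chain.

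The remaining subcase $r_1\le p_2$ is the most delicate. Starting from the (a)-style bound $\|Kf\|_{L^{(r_1,r_2)}_{\mu_1\times\mu_2}}\le C_2\|h\|_{L^{(r_1,p_2)}_{\mu_1\times\lambda_2}}$ (obtained by applying the $K_2$-estimate to $\phi$ without also applying the $K_1$-estimate, with $h$ as above), use mixed-norm Minkowski ($r_1\le p_2$) to flip the norm to $C_2\|h\|_{L^{(p_2,r_1)}_{\lambda_2\times\mu_1}}$; then, for each fixed $x_1$, a \emph{second} application of Minkowski's integral inequality (justified by $p_2\ge r_1\ge1$) to the defining formula for $h(x_1,\cdot)$ pulls the $L^{p_2}_{\lambda_2}$-norm through $K_1$, yielding a scalar $K_1$-estimate on $\psi(t_1)=\|f(t_1,\cdot)\|_{L^{p_2}_{\lambda_2}}$ that finishes with $C_1\|f\|_{L^{(p_2,p_1)}_{\lambda_2\times\lambda_1}}$. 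The main obstacle is organizing this last case: three Minkowski inequalities (two integral, one mixed-norm) must be invoked in the correct order and orientation so that the intermediate norms align with the available mapping properties of $K_1$ and $K_2$.
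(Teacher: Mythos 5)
Your proposal is correct and follows essentially the same route as the paper: the identical Minkowski-then-$C_1$-then-$C_2$ chain for part (a), and the same four-case split of $\min(p_1,r_1)\le\max(p_2,r_2)$ for part (b), with each case handled by the same sequence and orientation of Minkowski and factor estimates (including the three-Minkowski argument in the $r_1\le p_2$ case).
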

\begin{proof} In a slight abuse of notation we write,
\begin{align*}
K_1f(x_1,t_2)&=\int_{T_1} k_1(x_1,t_1)f(t_1,t_2)\,d\lambda_1(t_1)
\quad\text{and}\\
K_2f(t_1,x_2)&=\int_{T_2} k_2(x_2,t_2)f(t_1,t_2)\,d\lambda_2(t_2).
\end{align*}

To prove part \ref{Ka}, use the hypothesis $r_1\ge1$ to apply Minkowski's  inequality and then invoke the definitions of $C_1$ and $C_2$. This yields,
\begin{align*}
&\left(
\int_{X_2}\left(
\int_{X_1} Kf(x_1,x_2)^{r_1}
\,d\mu_1(x_1)\right)^{r_2/r_1}
\,d\mu_2(x_2)\right)^{1/r_2}\\
&=\left(
\int_{X_2}\left(
\int_{X_1} \left(
\int_{T_2} \Big[k_2(x_2,t_2)K_1f(x_1,t_2)\Big]
\,d\lambda_2(t_2)\right)^{r_1}\!\!
\,d\mu_1(x_1)\right)^{r_2/r_1}\!\!
\,d\mu_2(x_2)\right)^{1/r_2}\\
&\le\left(
\int_{X_2}\left(
\int_{T_2} \left(
\int_{X_1}  \Big[k_2(x_2,t_2)K_1f(x_1,t_2)\Big]^{r_1}
\,d\mu_1(x_1)\right)^{1/r_1}\!\!
\,d\lambda_2(t_2)\right)^{r_2}\!\!
\,d\mu_2(x_2)\right)^{1/r_2}\\
&=\left(
\int_{X_2}\left(
\int_{T_2} k_2(x_2,t_2)\left(
\int_{X_1}  K_1f(x_1,t_2)^{r_1}
\,d\mu_1(x_1)\right)^{1/r_1}\!\!
\,d\lambda_2(t_2)\right)^{r_2}\!\!
\,d\mu_2(x_2)\right)^{1/r_2}\\
&\le C_1\left(
\int_{X_2}\left(
\int_{T_2} k_2(x_2,t_2)\left(
\int_{T_1}  f(t_1,t_2)^{p_1}
\,d\lambda_1(t_1)\right)^{1/p_1}\!\!
\,d\lambda_2(t_2)\right)^{r_2}\!\!
\,d\mu_2(x_2)\right)^{1/r_2}\\
&\le C_1C_2\left(
\int_{T_2}\left(
\int_{T_1}f(t_1,t_2)^{p_1}
\,d\lambda_1(t_1)\right)^{p_2/p_1}\!\!
\,d\lambda_2(t_2)\right)^{1/p_2}.
\end{align*}

Part \ref{Kb} will be done in four cases: They arise from the observation that the condition $\min(p_1,r_1)\le\max(p_2,r_2)$ is satisfied if and only if one of, $p_1\le p_2$, $r_1\le r_2$, $p_1\le r_2$, or $r_1\le p_2$ holds. If $p_1\le p_2$ then part \ref{Ka}, followed by Minkowski's inequality, yields
\[
\|Kf\|_{L^{(r_1,r_2)}_{\mu_1\times\mu_2}}
\le C_1C_2\|f\|_{L^{(p_1,p_2)}_{\lambda_1\times\lambda_2}}\le C_1C_2\|f\|_{L^{(p_2,p_1)}_{\lambda_2\times\lambda_1}}, \quad f\in L_{\lambda_1\times\lambda_2}^+.
\]
If $r_1\le r_2$ then we begin by using Minkowski's inequality, and follow with part \ref{Ka} to get
\[
\|Kf\|_{L^{(r_1,r_2)}_{\mu_1\times\mu_2}}
\le\|Kf\|_{L^{(r_2,r_1)}_{\mu_2\times\mu_1}}
\le C_2C_1\|f\|_{L^{(p_2,p_1)}_{\lambda_2\times\lambda_1}}, \quad f\in L_{\lambda_1\times\lambda_2}^+.
\]
If $p_1\le r_2$ then we apply the definition of $C_1$ followed by Minkowski's inequality and then the definition of $C_2$ to get
\begin{align*}
&\left(
\int_{X_2}\left(
\int_{X_1} Kf(x_1,x_2)^{r_1}
\,d\mu_1(x_1)\right)^{r_2/r_1}
\,d\mu_2(x_2)\right)^{1/r_2}\\
&=\left(
\int_{X_2}\left(
\int_{X_1} \left(
\int_{T_1} k_1(x_1,t_1)K_2f(t_1,x_2)
\,d\lambda_1(t_1)\right)^{r_1}
\,d\mu_1(x_1)\right)^{r_2/r_1}
\,d\mu_2(x_2)\right)^{1/r_2}\\
&\le C_1\left(
\int_{X_2}\left(
\int_{T_1}  \Big[K_2f(t_1,x_2)\Big]^{p_1}
\,d\lambda_1(t_1)\right)^{r_2/p_1}
\,d\mu_2(x_2)\right)^{1/r_2}\\
&\le C_1\left(
\int_{T_1}\left(
\int_{X_2}  \Big[K_2f(t_1,x_2)\Big]^{r_2}
\,d\mu_2(x_2)\right)^{p_1/r_2}
\,d\lambda_1(t_1)\right)^{1/p_1}\\
&\le C_1C_2\left(
\int_{T_1}\left(
\int_{T_2}  f(t_1,t_2)^{p_2}
\,d\lambda_2(t_2)\right)^{p_1/p_2}
\,d\lambda_1(t_1)\right)^{1/p_1}.
\end{align*}
If $r_1\le p_2$ the process is somewhat lengthy, using the definitions of $C_1$ and $C_2$ as well as three applications of Minkowski's inequality. First, apply Minkowski's inequality with $r_1\ge1$, followed by the definition of $C_2$ to get 
\begin{align*}
&\left(
\int_{X_2}\left(
\int_{X_1} Kf(x_1,x_2)^{r_1}
\,d\mu_1(x_1)\right)^{r_2/r_1}
\,d\mu_2(x_2)\right)^{1/r_2}\\
&=\left(
\int_{X_2}\left(
\int_{X_1} \left(
\int_{T_2} \Big[k_2(x_2,t_2)K_1f(x_1,t_2)\Big]
\,d\lambda_2(t_2)\right)^{r_1}\!\!
\,d\mu_1(x_1)\right)^{r_2/r_1}\!\!
\,d\mu_2(x_2)\right)^{1/r_2}\\
&\le\left(
\int_{X_2}\left(
\int_{T_2} \left(
\int_{X_1} \Big[k_2(x_2,t_2) K_1f(x_1,t_2)\Big]^{r_1}
\,d\mu_1(x_1)\right)^{1/r_1}\!\!
\,d\lambda_2(t_2)\right)^{r_2}\!\!
\,d\mu_2(x_2)\right)^{1/r_2}\\
&=\left(
\int_{X_2}\left(
\int_{T_2} k_2(x_2,t_2)\left(
\int_{X_1}  K_1f(x_1,t_2)^{r_1}
\,d\mu_1(x_1)\right)^{1/r_1}\!\!
\,d\lambda_2(t_2)\right)^{r_2}\!\!
\,d\mu_2(x_2)\right)^{1/r_2}\\
&\le C_2\left(
\int_{T_2}\left(
\int_{X_1}  \Big[K_1f(x_1,t_2)\Big]^{r_1}
\,d\mu_1(x_1)\right)^{p_2/r_1}
\,d\lambda_2(t_2)\right)^{1/p_2}.
\end{align*}
Now Minkowski's inequality with $p_2\ge r_1$ shows that the last expression is no larger than
\[
C_2\left(
\int_{X_1}\left(
\int_{T_2}  \Big[K_1f(x_1,t_2)\Big]^{p_2}
\,d\lambda_2(t_2)\right)^{r_1/p_2}
\,d\mu_1(x_1)\right)^{1/r_1}.
\]
After expanding $K_1f(x_1,t_2)$ in this expression we may apply Minkowski's inequality with $p_2\ge1$ to get
\begin{align*}
&C_2\left(
\int_{X_1}\left(
\int_{T_2}  \left(
\int_{T_1} \Big[k_1(x_1,t_1)f(t_1,t_2)\Big]
\,d\lambda_1(t_1)\right)^{p_2}
\,d\lambda_2(t_2)\right)^{r_1/p_2}
\,d\mu_1(x_1)\right)^{1/r_1}\\
&\le C_2\left(
\int_{X_1}\left(
\int_{T_1}  \left(
\int_{T_2} \Big[k_1(x_1,t_1)f(t_1,t_2)\Big]^{p_2}
\,d\lambda_2(t_2)\right)^{1/p_2}\!\!
\,d\lambda_1(t_1)\right)^{r_1}\!\!
\,d\mu_1(x_1)\right)^{1/r_1}\\
&= C_2\left(
\int_{X_1}\left(
\int_{T_1} k_1(x_1,t_1) \left(
\int_{T_2} f(t_1,t_2)^{p_2}
\,d\lambda_2(t_2)\right)^{1/p_2}\!\!
\,d\lambda_1(t_1)\right)^{r_1}\!\!
\,d\mu_1(x_1)\right)^{1/r_1}\\
&\le C_1C_2\left(
\int_{T_1}\left(
\int_{T_2}  f(t_1,t_2)^{p_2}
\,d\lambda_2(t_2)\right)^{p_1/p_2}
\,d\lambda_1(t_1)\right)^{1/p_1},
\end{align*}
where the last inequality uses the definition of $C_1$. These estimates complete the proof of the fourth and last case.
\end{proof}

The index conditions in Theorem \ref{K}\ref{Kb} are somewhat stronger than  needed. It is enough to assume that one (or more) of the following four conditions holds: $p_1\le r_2$; $1\le r_1\le p_2$; $1\le r_1$ and $p_1\le p_2$; or $1\le r_2$ and $r_1\le r_2$. The statement of the theorem remains valid when some indices are infinite, provided the index conditions are met, but the proofs would have to be modified to accommodate occurrences of the supremum norm. 

The inequalities of Theorem \ref{K} are stated for non-negative functions only but it is routine to extend the operator $K$ to all functions for which the right hand side is finite, in a way that preserves the norm inequalities. 

Also, the results of Theorem \ref{K} may be seen to hold for a more general class of positive operators. Instead of supposing that the factors $K_1$ and $K_2$ are integral operators with non-negative kernels, it would suffice to assume that they map positive functions to positive functions and possess formal adjoints. See \cite{Sposops}*{Lemma 2.4} for properties of such operators and \cite{HS}*{Section 4} for connections with positive integral operators. One advantage of such an extension is that the identity operator is not an integral operator (in general) but it does have a formal adjoint. 

\begin{cor}\label{Laplace} Suppose $p_1,p_2\in (1,2]$. The bivariate Laplace  transform $\mathscr L_2$, defined by
\[
\mathscr L_2f(x_1,x_2)=\int_0^\infty\int_0^\infty e^{-x_1t_1-x_2t_2}f(t_1,t_2)\,dt_1\,dt_2,\quad x_1>0,\,x_2>0,
\]
satisfies the permuted mixed norm inequality
\[
\|\mathscr L_2f\|_{L^{(p_1',p_2')}}
\le C(p_1)C(p_2)\|f\|_{L^{(p_2,p_1)}}.
\]
Here $1/p_1+1/p_1' =1/p_2+1/p_2'= 1$, and $C(p)$ is the norm of the single-variable Laplace transform as a map from $L^p$ to $L^{p'}$.
\end{cor}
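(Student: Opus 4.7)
The plan is to realize $\mathscr L_2$ as a product operator and apply Theorem \ref{K}\ref{Kb} directly. The kernel factors as $e^{-x_1t_1-x_2t_2}=e^{-x_1t_1}e^{-x_2t_2}$, so with $k_j(x_j,t_j)=e^{-x_jt_j}$ and $T_j=X_j=(0,\infty)$ equipped with Lebesgue measure for $j=1,2$, each factor operator $K_j$ is exactly the single-variable Laplace transform. By the definition of $C(p_j)$, we may then take $r_j=p_j'$ and $C_j=C(p_j)$ in the hypotheses of Theorem \ref{K}.

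Next I would verify the index conditions of Theorem \ref{K}\ref{Kb}. The assumption $p_1,p_2\in(1,2]$ gives $p_1',p_2'\in[2,\infty)$, so $r_1=p_1'\ge 1$ and $r_2=p_2'\ge 1$. Moreover, since $p_j\le 2\le p_j'$ for $j=1,2$, we have $\min(p_1,r_1)=p_1$ and $\max(p_2,r_2)=p_2'$, so the required inequality $\min(p_1,r_1)\le\max(p_2,r_2)$ reduces to $p_1\le p_2'$, which holds because $p_1\le 2\le p_2'$.

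With these hypotheses satisfied, Theorem \ref{K}\ref{Kb} delivers precisely the stated inequality, once one identifies $L^{(p_2,p_1)}$ in the corollary with the permuted mixed norm $L^{(p_2,p_1)}_{\lambda_2\times\lambda_1}$ in the theorem (and similarly for $L^{(p_1',p_2')}$), where $\lambda_1$ and $\lambda_2$ are Lebesgue measure on $(0,\infty)$. There is no real obstacle here: all the analytic work has already been done in Theorem \ref{K}, and the only nontrivial thing to check is the mild index compatibility just verified. The role of the restriction $p_j\le 2$ is precisely to push each $p_j'$ to the opposite side of $2$, which makes the condition $\min(p_1,r_1)\le\max(p_2,r_2)$ automatic and allows the permuted right-hand side to appear.
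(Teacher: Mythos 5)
Your proposal is correct and follows exactly the paper's own argument: factor the kernel to see $\mathscr L_2$ as a product operator with both factors equal to the single-variable Laplace transform, then check $p_1'\ge1$, $p_2'\ge1$, and $\min(p_1,p_1')=p_1\le2\le p_2'=\max(p_2,p_2')$ so that Theorem \ref{K}\ref{Kb} applies. The only point the paper makes explicit that you leave implicit is the well-known fact that $C(p)$ is finite for $1<p\le2$, which is needed for the conclusion to have content.
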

\begin{proof} The bivariate Laplace transform is a product operator, in the above sense, whose factors are both the single-variable Laplace transform, $\mathscr L$, given by
\[
\mathscr Lf(x)=\int_0^\infty e^{-xt}f(t)\,dt,\quad x>0.
\]
It is well known that $\mathscr L$ is bounded as a map from $L^p$ to $L^{p'}$, i.e. $C(p)$ is finite, when $1<p\le 2$. Since $p_1'\ge1$, $p_2'\ge1$, and $\min(p_1,p_1')=p_1\le2\le p_2'=\max(p_2,p_2')$ the second statement of Theorem \ref{K} gives the conclusion.
\end{proof}

Note that if $1<p_2<p_1\le 2$, the permuted mixed norm inequality given above can be used to refine the unpermuted one given by Theorem \ref{K}\ref{Ka} in two different ways. Corollary \ref{Laplace} and Minkowski's inequality give
\[
\|\mathscr L_2f\|_{L^{(p_1',p_2')}}
\le C(p_1)C(p_2)\|f\|_{L^{(p_2,p_1)}}
\le C(p_1)C(p_2)\|f\|_{L^{(p_1,p_2)}},
\]
and also,
\[
\|\mathscr L_2f\|_{L^{(p_1',p_2')}}
\le\|\mathscr L_2f\|_{L^{(p_2',p_1')}}
\le C(p_1)C(p_2)\|f\|_{L^{(p_1,p_2)}}.
\]

Using Theorem \ref{K} it is easy to generate additional permuted mixed norm inequalities, by beginning with known single-variable inequalities. See, for example,  \cite{A}*{Corollary 1} for more single-variable inequalities involving the Laplace transform.

\section{Young's inequality}\label{conv}

Fix a positive integer $n$ and let $L^+$ be the collection of non-negative Lebesgue measurable functions on $\mathbb R^n$. For $P=(p_1,\dots,p_n)\in [1,\infty)^n$ define
\[
\|f\|_{L^P}=\left(\int_{\mathbb R}\dots\left(\int_{\mathbb R}
\left(\int_{\mathbb R} |f(t_1,\dots,t_n)|^{p_1}\,dt_1\right)^{p_2/p_1}\,dt_2\right)^{p_3/p_2}\dots\,dt_n\right)^{1/p_n}.
\]

The convolution of two (real-valued) functions on $\mathbb R^n$ is defined by,
\[
f*g(x)=\int_{\mathbb R^n} f(x-t)g(t)\,dt,
\]
whenever the integral exists. 

For a fixed function $g$ the map $f\mapsto f*g$ has a kind of product structure. But, even in the two-variable case, it is not a product operator of the sort considered in the previous section unless $g$ factors as $g(t_1,t_2)=g_1(t_1)g_2(t_2)$. This will keep us from establishing permuted mixed norm inequalities. Nevertheless, exploiting the existing product structure provides an elementary proof of a $n$-variable mixed norm Young's inequality.

Recall the single-variable Young's inequality: If $p,q,r\in [1,\infty]$ satisfy $1/p+1/q=1/r+1$, $f\in L^p$ and $g\in L^q$, then $f*g$ is well defined and $\|f*g\|_{L^r}\le\|f\|_{L^p}\|g\|_{L^q}$.

\begin{thm} Suppose $P=(p_1,\dots,p_n)$, $Q=(q_1,\dots,q_n)$, and $R=(r_1,\dots r_n)$ satisfy, 
\[
\frac1{p_j}+\frac1{q_j}=\frac1{r_j}+1,\quad j=1,\dots,n.
\]
If $f\in L^P$ and $g\in L^Q$ then $f*g$ is well defined, and
\[
\|f*g\|_{L^R}\le\|f\|_{L^P}\|g\|_{L^Q}.
\]
\end{thm}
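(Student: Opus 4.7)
The plan is to prove the theorem by induction on $n$. The base case $n=1$ is exactly the single-variable Young's inequality, which is stated as available just before the theorem. For the inductive step, I assume the result for convolutions on $\mathbb{R}^{n-1}$ and peel off the last coordinate.

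Writing $x=(x',x_n)$ and $t=(t',t_n)$ with $x',t'\in\mathbb{R}^{n-1}$, Fubini allows me to write the convolution as
\[
f*g(x',x_n)=\int_{\mathbb{R}} \bigl[f(\cdot,x_n-t_n)\ast_{n-1} g(\cdot,t_n)\bigr](x')\,dt_n,
\]
where $\ast_{n-1}$ denotes convolution on $\mathbb{R}^{n-1}$. For each fixed $x_n$ and $t_n$, the inductive hypothesis applied to the indices $(p_1,\dots,p_{n-1})$, $(q_1,\dots,q_{n-1})$, $(r_1,\dots,r_{n-1})$ (which satisfy the Young relation in each coordinate) gives
\[
\|f(\cdot,x_n-t_n)\ast_{n-1}g(\cdot,t_n)\|_{L^{(r_1,\dots,r_{n-1})}}
\le \|f(\cdot,x_n-t_n)\|_{L^{(p_1,\dots,p_{n-1})}}\|g(\cdot,t_n)\|_{L^{(q_1,\dots,q_{n-1})}}.
\]

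Next I move the outer $t_n$-integral inside the $(r_1,\dots,r_{n-1})$-norm using Minkowski's integral inequality in its mixed norm form, iterated $n-1$ times (once for each coordinate). Each application swaps an inner $L^1(dt_n)$ past an $L^{r_j}(dx_j)$; since $p_j,q_j\ge 1$ forces $r_j\ge 1$ via $1/r_j=1/p_j+1/q_j-1$, every swap is legal. The result is
\[
\|f*g(\cdot,x_n)\|_{L^{(r_1,\dots,r_{n-1})}}
\le \int_{\mathbb{R}} F(x_n-t_n)\,G(t_n)\,dt_n = (F\ast_1 G)(x_n),
\]
where I have set $F(s)=\|f(\cdot,s)\|_{L^{(p_1,\dots,p_{n-1})}}$ and $G(s)=\|g(\cdot,s)\|_{L^{(q_1,\dots,q_{n-1})}}$, functions on $\mathbb{R}$.

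Finally, taking the $L^{r_n}(dx_n)$-norm of both sides and invoking classical (one-variable) Young's inequality for $F\ast_1 G$, with indices $(p_n,q_n,r_n)$, yields
\[
\|f*g\|_{L^R}\le\|F\ast_1 G\|_{L^{r_n}}\le\|F\|_{L^{p_n}}\|G\|_{L^{q_n}}=\|f\|_{L^P}\|g\|_{L^Q},
\]
completing the induction. The well-definedness of $f*g$ follows by the same estimates applied to $|f|$ and $|g|$, showing that the defining integral converges almost everywhere whenever the right hand side is finite. The principal technical point is the mixed-norm Minkowski step; the rest is a clean factorization of the convolution into an $(n-1)$-dimensional convolution parametrized by $t_n$, followed by a one-dimensional convolution in $t_n$, with Young's inequality applied at each of the two stages.
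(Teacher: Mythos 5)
Your proof is correct, and it rests on the same two ingredients as the paper's --- the one-dimensional Young inequality and the mixed-norm form of Minkowski's integral inequality --- but it assembles them in the mirror-image order. The paper peels off the \emph{innermost} variable: for fixed $(\hat x,\hat t)$ it applies one-dimensional Young in $t_1$, then a single Minkowski swap to pull the $d\hat t$-integral outside the $L^{r_1}(dx_1)$ norm, producing reduced functions $f_2,g_2$ on $\mathbb R^{n-1}$ (the partial $L^{p_1}$ and $L^{q_1}$ norms of $f$ and $g$), and iterates, closing the chain with Young in the last variable. You instead peel off the \emph{outermost} variable: Minkowski first, iterated through all $n-1$ inner coordinates to pull the $dt_n$-integral outside the entire $(r_1,\dots,r_{n-1})$-norm (legitimate since each $r_j\ge1$), then the inductive hypothesis on the slices, then one-dimensional Young in $x_n$. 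Your organization is a cleaner formal induction and makes explicit the structural fact that an $n$-fold convolution is a one-dimensional convolution of $(n-1)$-fold convolutions; the price is that each level of the induction needs the full $(n-1)$-variable Minkowski inequality rather than a single two-index swap. Two details are worth stating explicitly: the induction should be formulated for arbitrary non-negative measurable functions with both sides allowed to equal $+\infty$, so that the inductive hypothesis applies to the slices $f(\cdot,x_n-t_n)$ and $g(\cdot,t_n)$ without first verifying that they lie in the relevant mixed-norm spaces (this is also how the paper proceeds before passing to signed functions); and the closing identity $\|F\|_{L^{p_n}}=\|f\|_{L^P}$ is precisely the definition of the mixed norm, which is what allows the final one-dimensional Young step to finish the argument.
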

\begin{proof} We begin by proving the inequality when $f$ and $g$ are non-negative, so that $f*g$ is well-defined as a function taking values in $[0,\infty]$.

As the first step in a recursive argument, let $f_1=f$ and $g_1=g$. Define $\hat x$ and $\hat t$ so that $(x_1,x_2,\dots,x_n)=(x_1,\hat x)$ and $(t_1,t_2,\dots,t_n)=(t_1,\hat t)$. Also, let $d\hat t$ denote $dt_2,\dots dt_n$. Then Young's inequality gives, for each $\hat x$ and $\hat t$,
\begin{multline*}
\left(\int_{\mathbb R}\left(\int_{\mathbb R} f_1(x_1-t_1,\hat x-\hat t)g_1(t_1,\hat t)\,dt_1\right)^{r_1}\,dx_1\right)^{1/r_1}\\
\le\left(\int_{\mathbb R} f_1(y_1,\hat x-\hat t)^{p_1}\,dy_1\right)^{1/p_1}
\left(\int_{\mathbb R} g_1(t_1,\hat t)^{q_1}\,dt_1\right)^{1/q_1}.
\end{multline*}
So, applying Minkowski's inequality, we get
\begin{align*}
&\left(\int_{\mathbb R} f_1*g_1(x_1,\hat x)^{r_1}\,dx_1\right)^{1/r_1}\\
&=\left(\int_{\mathbb R}\left(\int_{\mathbb R^{n-1}}\left[\int_{\mathbb R} f_1(x_1-t_1,\hat x-\hat t)g_1(t_1,\hat t)\,dt_1\right]\,d\hat t\right)^{r_1}\,dx_1\right)^{1/r_1}\\
&\le\int_{\mathbb R^{n-1}}\left(\int_{\mathbb R}\left[\int_{\mathbb R} f_1(x_1-t_1,\hat x-\hat t)g_1(t_1,\hat t)\,dt_1\right]^{r_1}\,dx_1\right)^{1/r_1}d\hat t\\
&\le\int_{\mathbb R^{n-1}}
\left(\int_{\mathbb R} f_1(y_1,\hat x-\hat t)^{p_1}\,dy_1\right)^{1/p_1}
\left(\int_{\mathbb R} g_1(t_1,\hat t)^{q_1}\,dt_1\right)^{1/q_1}\,d\hat t.
\end{align*}
Now define $f_2,g_2:\mathbb R^{n-1}\to [0,\infty]$ by
\[
f_2(y_2,\dots,y_n)=\left(\int_{\mathbb R} f_1(y_1,y_2,\dots,y_n)^{p_1}\,dy_1\right)^{1/p_1}
\]
and
\[
g_2(t_2,\dots,t_n)=\left(\int_{\mathbb R} g_1(t_1,t_2,\dots,t_n)^{q_1}\,dt_1\right)^{1/q_1}.
\]
We have just shown that,
\begin{equation}\label{1}
\left(\int_{\mathbb R} f_1*g_1(x_1,x_2,\dots,x_n)^{r_1}\,dx_1\right)^{1/r_1}
\le f_2*g_2(x_2,\dots,x_n).
\end{equation}
Applying the above argument to $f_2$ and $g_2$ gives
\begin{equation}\label{2}
\left(\int_{\mathbb R} f_2*g_2(x_2,x_3,\dots,x_n)^{r_2}\,dx_2\right)^{1/r_2}
\le f_3*g_3(x_3,\dots,x_n),
\end{equation}
where
\[
f_3(y_3,\dots,y_n)=\left(\int_{\mathbb R} f_2(y_2,y_3,\dots,y_n)^{p_2}\,dy_2\right)^{1/p_2}
\]
and
\[
g_3(t_3,\dots,t_n)=\left(\int_{\mathbb R} g_2(t_2,t_3,\dots,t_n)^{q_2}\,dt_2\right)^{1/q_2}.
\]
We continue in this way until reaching 
\begin{equation}\label{n}
\left(\int_{\mathbb R} f_{n-1}*g_{n-1}(x_{n-1},x_n)^{r_{n-1}}\,dx_{n-1}\right)^{1/r_{n-1}}
\le f_n*g_n(x_n),
\end{equation}
where
\[
f_n(y_n)=\left(\int_{\mathbb R} f_{n-1}(y_{n-1},y_n)^{p_{n-1}}\,dy_{n-1}\right)^{1/p_{n-1}}
\]
and
\[
g_n(t_n)=\left(\int_{\mathbb R} g_{n-1}(t_{n-1},t_n)^{q_{n-1}}\,dt_{n-1}\right)^{1/q_{n-1}}.
\]
Then Young's inequality gives,
\begin{equation}\label{Yn}
\left(\int_{\mathbb R} f_n*g_n(x_n)^{r_n}\,dx_n\right)^{1/r_n}
\le \left(\int_{\mathbb R} f_n(t_n)^{p_n}\,dt_n\right)^{1/p_n}
\left(\int_{\mathbb R} g_n(t_n)^{q_n}\,dt_n\right)^{1/q_n}.
\end{equation}
Recursively applying the definitions of $f_j$ and $g_j$ shows that the right hand side of (\ref{Yn}) is just
\[
\|f\|_{L^P}\|g\|_{L^Q}.
\]
The convolution estimates (\ref1),(\ref2),...,(\ref{n}) concatenate to give a lower bound for the left hand side of (\ref{Yn}) and we have
\[
\|f*g\|_{L^R}\le\|f\|_{L^P}\|g\|_{L^Q}.
\]
Now we drop the assumption of positivity on $f\in L^P$ and $g\in L^Q$. For bounded, integrable functions the convolution exists and is finite everywhere. And it is routine to express $f$ and $g$ as pointwise limits of bounded, integrable functions $f_k$ and $g_k$, respectively, satisfying $|f_k|\le |f|$ and $|g_k|\le |g|$. Since $|f|\in L^P$ and $|g|\in L^Q$ we have shown $|f|*|g|\in L^R$ and hence $|f|*|g|(x)<\infty$ almost everywhere. So for almost every $x$ the dominated convergence theorem proves that $f*g(x)$ exists. Also,
\[
\|f*g\|_{L^R}\le\||f|*|g|\|_{L^R}\le\||f|\|_{L^P}\||g|\|_{L^Q}=\|f\|_{L^P}\|g\|_{L^Q}.
\]
This completes the proof.
\end{proof}
Once again, the statement of the theorem remains valid when some indices are infinite, but the proof would have to be modified to accommodate occurrences of the supremum norm.

The same straightforward procedure may be applied to prove a mixed norm Young's inequality over any finite product of locally compact unimodular groups.
\begin{bibdiv}
\begin{biblist}
\bib{A}{article}{
   author={Andersen, Kenneth F.},
   title={Weighted inequalities for convolutions},
   journal={Proc. Amer. Math. Soc.},
   volume={123},
   date={1995},
   number={4},
   pages={1129--1136},
   issn={0002-9939},
   doi={10.2307/2160710},
}
\bib{AKZ}{article}{
   author={Appell, J.},
   author={Kalitvin, A. S.},
   author={Zabre{\u\i}ko, P. P.},
   title={Partial integral operators in Orlicz spaces with mixed norm},
   journal={Colloq. Math.},
   volume={78},
   date={1998},
   number={2},
   pages={293--306},
   issn={0010-1354},
}
\bib{AK}{article}{
   author={Appell, J{\"u}rgen},
   author={Kufner, Alois},
   title={On the two-dimensional Hardy operator in Lebesgue spaces with
   mixed norms},
   journal={Analysis},
   volume={15},
   date={1995},
   number={1},
   pages={91--98},
   issn={0174-4747},
   doi={10.1524/anly.1995.15.1.91},
}
\bib{BP}{article}{
   author={Benedek, A.},
   author={Panzone, R.},
   title={The space $L^{p}$, with mixed norm},
   journal={Duke Math. J.},
   volume={28},
   date={1961},
   pages={301--324},
   issn={0012-7094},
}
\bib{F}{article}{
   author={Fournier, John J. F.},
   title={Mixed norms and rearrangements: Sobolev's inequality and
   Littlewood's inequality},
   journal={Ann. Mat. Pura Appl. (4)},
   volume={148},
   date={1987},
   pages={51--76},
   issn={0003-4622},
   doi={10.1007/BF01774283},
}
\bib{thesis}{book}{ 
   author={Grey,  Wayne},
   title={Inclusions Among Mixed-Norm Lebesgue Spaces},
   note={Ph.D. Thesis},
   publisher={University of Western Ontario, London, Canada},
   date={2015},
   pages={v+114},
}
\bib{G}{article}{ 
   author={Grey,  Wayne},
   title={Mixed-norm estimates and symmetric geometric means},
   journal={To appear in Positivity. arXiv:1507.08327 [math.FA]},
   date={July 29, 2015},
}
\bib{GS}{article}{ 
   author={Grey,  Wayne},
   author={Sinnamon, Gord},
   title={The inclusion problem for mixed-norm spaces},
   journal={Trans. Amer. Math. Soc.},
   date={January 26, 2016},
   doi={10.1090/tran6665},
}
\bib{HS}{article}{
   author={Howard, Ralph},
   author={Schep, Anton R.},
   title={Norms of positive operators on $L^p$-spaces},
   journal={Proc. Amer. Math. Soc.},
   volume={109},
   date={1990},
   number={1},
   pages={135--146},
   issn={0002-9939},
   doi={10.2307/2048373},
}
\bib{KZ}{article}{
   author={Kalitvin, A. S.},
   author={Zabrejko, P. P.},
   title={On the theory of partial integral operators},
   journal={J. Integral Equations Appl.},
   volume={3},
   date={1991},
   number={3},
   pages={351--382},
   issn={0897-3962},
   doi={10.1216/jiea/1181075630},
}
\bib{KA}{book}{
   author={Kantorovich, L. V.},
   author={Akilov, G. P.},
   title={Functional analysis},
   edition={2},
   note={Translated from the Russian by Howard L. Silcock},
   publisher={Pergamon Press, Oxford-Elmsford, N.Y.},
   date={1982},
   pages={xiv+589},
   isbn={0-08-023036-9},
   isbn={0-08-026486-7},
}
\bib{Sposops}{article}{
   author={Sinnamon, Gord},
   title={Weighted inequalities for positive operators},
   journal={Math. Inequal. Appl.},
   volume={8},
   date={2005},
   number={3},
   pages={419--440},
   issn={1331-4343},
   doi={10.7153/mia-08-39},
}
\bib{St}{article}{
   author={Stepanov, V. D.},
   title={Two-weight estimates for Riemann-Liouville integrals},
   language={Russian},
   journal={Izv. Akad. Nauk SSSR Ser. Mat.},
   volume={54},
   date={1990},
   number={3},
   pages={645--656},
   issn={0373-2436},
   translation={
      journal={Math. USSR-Izv.},
      volume={36},
      date={1991},
      number={3},
      pages={669--681},
      issn={0025-5726},
   },
}

\end{biblist}
\end{bibdiv}
\end{document}